\setlist[enumerate]{label=\textup{(\roman*)}}
\newtheorem*{thm*}{Theorem}
\newtheorem*{ques*}{Question}
\newtheorem{thm}  {Theorem} [section]
\newtheorem{prop} [thm] {Proposition}
\newtheorem{lem} [thm] {Lemma}
\newtheorem{cor} [thm] {Corollary}
\theoremstyle{definition} 
\newtheorem*{rmk*}{Remark}
\numberwithin{equation}{section}
\crefname{thm}{Theorem}{Theorems}
\crefname{lem}{Lemma}{Lemmas}
\crefname{prop}{Proposition}{Propositions}
\crefname{cor}{Corollary}{Corollaries}
\DeclarePairedDelimiterX\set[1]\lbrace\rbrace{\,\def\given{\mid}#1\,} 
\newcommand{\cv}[1]{\ensuremath{\operatorname{cv}(#1)}}
\newcommand{\cd}[1]{\ensuremath{\operatorname{cd}(#1)}}
\newcommand{\Irr}[1]{\ensuremath{\operatorname{Irr}(#1)}}
\newcommand{\Lin}[1]{\ensuremath{\widehat{#1}}}
\DeclareMathOperator{\Dih}{Dih}
\newcommand{\size}[1]{\ensuremath{\lvert #1 \rvert}}
\renewcommand{\Re}{\operatorname{Re}}
\begin{document}

\title{Finite groups with very few character values}
\author[T.~Sakurai]{\href{https://orcid.org/0000-0003-0608-1852}{Taro Sakurai}}
\address{Department of Mathematics and Informatics, Graduate School of Science, Chiba University, 1-33, Yayoi-cho, Inage-ku, Chiba-shi, Chiba, 263-8522 Japan}
\email{tsakurai@math.s.chiba-u.ac.jp}

\keywords{%
	finite group, %
	irreducible character, %
	character values, %
	generalized dihedral group, %
	centralizers of involutions%
}
\subjclass[2010]{%
	\href{https://zbmath.org/classification/?q=20C15}{20C15} (%
	\href{https://zbmath.org/classification/?q=20D60}{20D60})}
\date{\today}
\begin{abstract}
	\noindent 
	Finite groups with very few character values are characterized.
	The following is the main result of this article:
	a finite non-abelian group has precisely four character values if and only if it is the generalized dihedral group of a non-trivial elementary abelian $3$-group.
	The proof involves the analysis of the centralizers of involutions.
\end{abstract}

\maketitle


\section*{Introduction}
\noindent 
A character table often contains substantial information about a finite group.
Even its partial data affects considerably on the group structure,
and character degrees has been drawn special attention over the years.
Surprisingly, however, the author cannot find any study on how the set of \emph{character values}
\begin{equation*}
	\cv{G} = \set{ \chi(g) \given \chi \in \Irr{G},\ g \in G }
\end{equation*}
affects the structure of a finite group \( G \),
where \( \Irr{G} \) denotes the set of irreducible characters of \( G \).

\begin{ques*}
	What information about a finite group can be obtained from its character values?
\end{ques*}

This article presents the characterizations of finite groups with very few character values.
Finite groups with less than four character values are abelian and their characterizations are rather easy (\cref{prop:very few}).
The first non-trivial example is, as in many cases, the symmetric group \( S_3 \) of degree three.
\begin{table}[h]
	\centering
	\newcommand{\columnlength}{9ex}
	\begin{tabular}{p{\columnlength}p{\columnlength}p{\columnlength}p{\columnlength}}
		\toprule
					        & \( (1)(2)(3) \)    & \( (1, 2)(3) \)     & \( (1, 2, 3) \)    \\
		\midrule
		\( \chi^{(3)} \)    & \( \phantom{-}1 \) & \( \phantom{-}1 \)  & \( \phantom{-}1 \) \\
		\( \chi^{(1^3)} \)  & \( \phantom{-}1 \) &           \( -1 \)  & \( \phantom{-}1 \) \\
		\( \chi^{(1, 2)} \) & \( \phantom{-}2 \) & \( \phantom{-}0 \)  &           \( -1 \) \\
		\bottomrule
	\end{tabular}
\end{table}
(See the character table.)
It is non-abelian and has precisely four character values.
This is the case we settle in this article.

\begin{thm*}
	A finite non-abelian group has precisely four character values if and only if it is the generalized dihedral group of a non-trivial elementary abelian \( 3 \)-group.
\end{thm*}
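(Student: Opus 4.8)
The two implications would be handled separately, and I would begin with the easy one.

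\emph{Sufficiency.} For a non-trivial elementary abelian $3$-group $N$, write $\Dih(N)=N\rtimes\langle t\rangle$ with $t$ inverting $N$. Its irreducible characters are the two linear ones (the trivial and the sign character, with values in $\{1,-1\}$) together with the degree-two characters $\operatorname{Ind}_N^G\lambda$ for $\lambda\in\Lin{N}\setminus\{1\}$, which are irreducible because $\lambda\ne\lambda^{-1}$; the latter vanish on $G\setminus N$ and take on $n\in N$ the value $\lambda(n)+\overline{\lambda(n)}=2\Re\lambda(n)\in\{2,-1\}$, since $\lambda(n)^3=1$. Hence $\cv{\Dih(N)}=\{-1,0,1,2\}$, and $\Dih(N)$ is non-abelian since $t$ acts non-trivially.

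\emph{Necessity.} Let $G$ be non-abelian with $\size{\cv{G}}=4$. The first step is to pin down $\cv{G}=\{-1,0,1,d\}$ for an integer $d\ge2$ and to show $G/G'$ is elementary abelian of order $2^r$ with $r\ge1$. Since $G$ has a nonlinear irreducible character, Burnside gives $0\in\cv{G}$, and some character degree $d\ge2$ lies in $\cv{G}$, so $\cv{G}=\{0,1,d,x\}$; as $\cv{G}$ is closed under complex conjugation, $x$ is real, and the relation $\sum_{\chi}\chi(1)\chi(g)=0$ for $g\ne1$ (to which the trivial character contributes $1$) forces $\cv{G}$ to contain a negative number, so $x<0$. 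A perfect $G$ is impossible: then every nonlinear character has degree $d$, so $\sum_{\chi\ne1}\chi(1)^2=\size{G}-1$ gives $d^2\mid\size{G}-1$, hence $\gcd(d,\size{G})=1$, contradicting $d\mid\size{G}$. Therefore $G$ has a non-constant linear character, whose values are roots of unity in $\cv{G}$; this forces $x=-1$, whence every linear character has order $\le2$, $G/G'$ is an elementary abelian $2$-group, and every nonlinear character has degree $d$.

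The core is then the analysis of the centralizers of involutions. Decomposing $\chi\overline{\chi}$ for a nonlinear $\chi$ into irreducibles, the linear constituents are exactly the $\lambda$ with $\lambda\chi=\chi$, each occurring once and forming a subgroup $\Lambda\le\Lin{G/G'}$, and all remaining constituents are nonlinear of degree $d$; comparing degrees gives $d\mid\size{\Lambda}$, and $\size{\Lambda}$ divides $2^r$, so $d$ is a power of $2$ — in particular even. Now fix an involution $t$. Using $\chi(t)\equiv\chi(1)\pmod2$, a nonlinear $\chi$ has $\chi(t)\in\{0,d\}$, with $\chi(t)=d$ exactly when $t\in\ker\chi$, and a linear $\chi$ has $\chi(t)\in\{1,-1\}$; writing $n_t$ for the number of nonlinear $\chi$ with $t\in\ker\chi$, the relation $\sum_{\chi}\chi(1)\chi(t)=0$ becomes $\sum_{\chi\text{ linear}}\chi(t)+d^2 n_t=0$, and since $\sum_{\chi\text{ linear}}\chi(t)$ equals $2^r$ if $t\in G'$ and $0$ otherwise, both summands must vanish: $n_t=0$ and $t\notin G'$. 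Hence $\size{C_G(t)}=\sum_\chi\chi(t)^2=2^r$ for every involution $t$; as no involution lies in $G'$, the subgroup $G'$ has odd order and $G=G'\rtimes P$ with $P$ a Sylow $2$-subgroup, $P\cong G/G'$. For an involution $t\in P$ this forces $C_G(t)=P$, so $C_{G'}(t)=1$, i.e. $t$ acts fixed-point-freely on $G'$; such an automorphism makes $G'$ abelian and acts by inversion, and if $r\ge2$ then a product of two distinct involutions of $P$ would be an involution acting trivially on $G'\ne1$, a contradiction. Thus $r=1$, $P=\langle t\rangle$, and $G=\Dih(G')$ with $G'$ abelian of odd order. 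Finally, the sufficiency computation applied to $\Dih(G')$ gives $\cv{G}=\{-1,0,1\}\cup\{\,\zeta+\overline{\zeta}\mid\zeta^{\exp G'}=1\,\}$; this set already contains $-1,0,1,2$, and if the odd integer $\exp G'$ exceeded $3$ it would contain the further value $2\cos(2\pi/\exp G')\notin\{-1,0,1,2\}$, so $\exp G'=3$ and $G'$ is a non-trivial elementary abelian $3$-group, completing the argument.

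I expect the main obstacle to be precisely the passage from the combinatorial datum $\cv{G}=\{-1,0,1,d\}$ to control of the Sylow $2$-subgroup and of the parity of $\size{G'}$. The decisive point is that, once $d$ is known to be even, both the orthogonality relation $\sum_\chi\chi(1)\chi(t)=0$ and the norm relation $\sum_\chi\chi(t)^2=\size{C_G(t)}$ at an involution $t$ become sign-definite, yielding simultaneously $n_t=0$, $t\notin G'$, and the exact order of $C_G(t)$; securing the evenness of $d$ through the decomposition of $\chi\overline{\chi}$ is the other delicate ingredient.
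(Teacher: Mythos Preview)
Your argument is correct, and it follows a route that is close in spirit to the paper's but is organized differently and is in one respect more elementary. The identification of $\cv{G}=\{-1,0,1,d\}$ and of $G/G'$ as elementary abelian $2$ matches the paper's \cref{lem:values} and \cref{lem:local}\ref{item:elementary}. For the evenness of $d$ the paper simply reads off $b(G)\mid\size{G:G'}$ from $\size{G}=\size{G:G'}+k\,b(G)^2$, which is shorter than your $\chi\overline{\chi}$ decomposition, though yours yields the stronger fact that $d$ is a $2$-power. The real divergence is at the involution step: the paper proves only that $C_G(t)$ is a $2$-group and then invokes Thompson's theorem to obtain $2$-nilpotency, followed by a separate argument (via $\overline{G}'\cap Z(\overline{G})=1$ in $G/O(G)$) to force $G'=O(G)$. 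Your single computation of $\sum_\chi \chi(1)\chi(t)=0$ simultaneously gives $t\notin G'$ (hence $\size{G'}$ odd, so $G'=O(G)$ and $G$ splits over $G'$ without Thompson) and $n_t=0$, and then $\sum_\chi \chi(t)^2$ pins down $\size{C_G(t)}=\size{G:G'}$ exactly. This makes your proof self-contained where the paper appeals to an external $p$-nilpotency criterion; conversely, the paper's path through $O(G)$ is a bit more modular and its divisibility step for $d$ is cleaner. The endgame (inversion action, $\size{P}=2$, and $\exp G'=3$ from the extra value $2\cos(2\pi/\exp G')$) is essentially the same in both.
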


The \emph{generalized dihedral group} \( \Dih A \) of \( A \) is defined to be the relative holomorph of \( A \) by the inversion \( A \to A \), \( a \mapsto a^{-1} \), where \( A \) is an abelian group that is not an elementary abelian \( 2 \)-group.
In particular, the generalized dihedral group \( \Dih C_3^r \) of a non-trivial elementary abelian \( 3 \)-group \( C_3^r \) has a presentation
\begin{equation*}
	\langle\, a_1, \dotsc, a_r, t \mid a_i^3 = [a_i, a_j] = t^2 = 1,\ a_i^t = a_i^{-1} \,\rangle.
\end{equation*}
Its centralizers of involutions have order two;
this is the key to identify the isomorphism classes of finite non-abelian groups with precisely four character values.

\begin{rmk*}
	It is natural to wonder what happens if a finite group has more character values.
	One can easily check that
	\( C_5^* \), \( C_2^* \times \Dih C_3^* \), and \( S_4 \)
	have precisely five character values where \( \ast \) indicates some positive integer.
	But there are also \emph{many} finite \( 2 \)-groups with precisely five character values,
	including the dihedral group \( D_8 \) and the quaternion group \( Q_8 \) of order eight.
	This makes it hard to prove or even guess the next case.
	Having said that, the character values seem to impose severe restriction on the group structure.
	For instance, it appears that finite groups with less than eight character values are solvable.
\end{rmk*}

We refer the readers to the books by Isaacs~\cite{Isa94} and Huppert~\cite{Hup98} for standard terminology and notation in the character theory of finite groups.

\section{Character values}
The preliminary results on character values are summarized in this section.
The largest character degree of a finite group \( G \) is denoted by \( b(G) \).

\begin{lem}
	\label{lem:zero}
	A finite group is non-abelian if and only if it has zero as a character value.
	In particular, every finite group with less than four character values is abelian.
\end{lem}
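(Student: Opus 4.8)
The plan is to prove the displayed equivalence in both directions and then obtain the ``in particular'' clause by a short counting argument. There is no real obstacle here: the only non-elementary input is a classical vanishing theorem of Burnside, and the only mildly clever point is how to pin down a fourth character value of a non-abelian group.

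\emph{Abelian implies no zero value.} If \( G \) is abelian then every \( \chi \in \Irr{G} \) is linear, i.e.\ a homomorphism \( G \to \CC^\times \), so each value \( \chi(g) \) is a root of unity and in particular nonzero; hence \( 0 \notin \cv{G} \). Contrapositively, if \( 0 \in \cv{G} \) then \( G \) is non-abelian.

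\emph{Non-abelian implies zero is a value.} If \( G \) is non-abelian it has a nonlinear irreducible character \( \chi \), that is, one with \( \chi(1) \geq 2 \). By Burnside's theorem (see Isaacs~\cite{Isa94}) such a \( \chi \) vanishes on some element \( g \in G \), so \( 0 = \chi(g) \in \cv{G} \). This is the one point where a reader may want the reference spelled out; everything else is elementary.

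\emph{The counting.} Granting the equivalence, it suffices to show that a non-abelian \( G \) has at least four character values. Fix a nonlinear irreducible character \( \chi \) and put \( d = \chi(1) \geq 2 \). Then \( 0 \), \( 1 \), and \( d \) are three distinct elements of \( \cv{G} \) (the first by the previous paragraph). To produce a fourth, use orthogonality: since \( \chi \neq 1_G \) we have \( \sum_{g \in G} \chi(g) = 0 \), hence \( \sum_{g \in G} \Re(\chi(g)) = 0 \); as \( \Re(\chi(1)) = d > 0 \), some \( g \in G \) satisfies \( \Re(\chi(g)) < 0 \). Such a value \( \chi(g) \) differs from \( 0 \), \( 1 \), and \( d \), all of which have nonnegative real part, so \( \size{\cv{G}} \geq 4 \). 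The contrapositive yields the assertion: a finite group with at most three character values is abelian.
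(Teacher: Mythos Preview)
Your argument is correct and follows essentially the same line as the paper's: cite Burnside for the nontrivial direction of the equivalence, then for the second part exhibit the three values \(0,1,d\) (the paper uses \(b(G)\) in place of your \(d\), but any nonlinear degree works) and produce a fourth value with negative real part via orthogonality. The only cosmetic difference is that you use the row relation \(\langle \chi,1_G\rangle=0\) where the paper invokes column orthogonality; either yields a value with negative real part and hence distinct from \(0,1,d\).
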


\begin{proof}
	See \cite[Theorem~3.15 (Burnside)]{Isa94} for the first part.
	For the second part, we shall prove the contraposition.
	Let \( G \) be a finite non-abelian group.
	We have \( \cv{G} \supseteq \{ 0, 1, b(G) \} \) by the first part.
	From the column orthogonality, there must be a character value with negative real part.
	Hence \( \size{\cv{G}} \ge 4 \).
\end{proof}

\begin{lem}
	\label{lem:|cv(G)|}
	The number of character values of a finite abelian group equals the maximum order of elements.
\end{lem}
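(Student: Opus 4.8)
The plan is to show that $\cv{A}$ coincides with the set $\mu_n$ of all $n$-th roots of unity in $\CC$, where $n$ is the maximum order of an element of the finite abelian group $A$; since $\mu_n$ has exactly $n$ elements, this is the assertion. The first step is to note that, because $A$ is abelian, $n$ is nothing but the exponent of $A$: every element order divides the largest one (for instance, $n$ is the largest invariant factor in the structure theorem for finite abelian groups), so $g^n = 1$ for every $g \in A$, while some fixed $g_0 \in A$ has order exactly $n$.

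Next I would prove $\cv{A} \subseteq \mu_n$. Every $\chi \in \Irr{A} = \Lin{A}$ is a homomorphism $A \to \CC^{\times}$, so $\chi(g)^n = \chi(g^n) = 1$ for all $g \in A$; hence $\chi(g) \in \mu_n$ and $\size{\cv{A}} \le n$.

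For the reverse inclusion I would exhibit a single irreducible character taking every value in $\mu_n$. The cyclic subgroup $\langle g_0 \rangle \cong C_n$ is a direct factor of $A$, say $A = \langle g_0 \rangle \times B$; let $\pi \colon A \to \langle g_0 \rangle$ be the projection and $\lambda$ a faithful linear character of $\langle g_0 \rangle$. Then $\chi \coloneqq \lambda \circ \pi \in \Irr{A}$ and $\chi(g_0) = \lambda(g_0)$ is a primitive $n$-th root of unity, so $\set{ \chi(g_0^{\,j}) \given 0 \le j < n }$ already exhausts $\mu_n$. Therefore $\cv{A} \supseteq \mu_n$, and together with the previous step $\cv{A} = \mu_n$, giving $\size{\cv{A}} = n$.

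I do not expect a genuine obstacle; the one place where commutativity is essential — and where a little care is warranted — is the identification of the maximal element order with the exponent, which is exactly what allows a single cyclic direct factor to carry all of $\cv{A}$. If one prefers to avoid the structure theorem, the same two ingredients can be obtained by hand: for $x, y \in A$ of orders $a$ and $b$ there is an element of order $\operatorname{lcm}(a, b)$, so the maximal order equals the common multiple of all element orders, i.e.\ the exponent; and restriction $\Lin{A} \to \Lin{\langle g_0 \rangle}$ is surjective, which again produces the needed character $\chi$ extending a faithful character of $\langle g_0 \rangle$.
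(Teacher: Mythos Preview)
Your argument is correct: identifying the exponent with the maximal element order, showing $\cv{A}\subseteq\mu_n$ because all irreducible characters are linear, and producing a character hitting a primitive $n$-th root of unity via a cyclic direct factor of order $n$ (or, equivalently, via surjectivity of restriction $\Lin{A}\to\Lin{\langle g_0\rangle}$) is exactly the standard route, and the claim that a cyclic subgroup of maximal order splits off is a classical lemma. The paper itself states \cref{lem:|cv(G)|} without proof, so there is nothing to compare against; your write-up would serve perfectly well as the omitted justification.
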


\begin{prop}
	\label{prop:very few}
	Let \( G \) be a finite group.
	\begin{enumerate}
		\item \( \size{\cv{G}} = 1 \) if and only if \( G \) is trivial. \label{item:one}
		\item \( \size{\cv{G}} = 2 \) if and only if \( G \) is a non-trivial elementary abelian \( 2 \)-group. \label{item:two}
		\item \( \size{\cv{G}} = 3 \) if and only if \( G \) is a non-trivial elementary abelian \( 3 \)-group. \label{item:three}
		\item Assume \( G \) is abelian. Then \( \size{\cv{G}} = 4 \) if and only if \( G \) has exponent \( 4 \). \label{item:four}
	\end{enumerate}
\end{prop}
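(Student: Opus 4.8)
The plan is to deduce all four statements from \cref{lem:zero} and \cref{lem:|cv(G)|}, together with one elementary fact: in a finite abelian group $A$ the maximal order of an element equals the exponent $\exp(A)$, the least common multiple of all element orders. (Given $x$ of order $m$ and $y$ of order $n$, suitable powers of $x$ and $y$ combine into an element of order $\operatorname{lcm}(m,n)$; iterate.) I would record this first, so that \cref{lem:|cv(G)|} takes the shape $\size{\cv A} = \exp(A)$ for every finite abelian group $A$.

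With this in hand, parts (i)--(iii) do not require their groups to be assumed abelian: if $\size{\cv G} \le 3 < 4$ then $G$ is abelian by \cref{lem:zero}, and the claim reduces to identifying the finite abelian groups of exponent $1$, $2$ and $3$. These are, respectively, the trivial group, the non-trivial elementary abelian $2$-groups, and the non-trivial elementary abelian $3$-groups; the only point requiring comment is the observation that an abelian group of prime exponent $p$ is exactly a non-trivial elementary abelian $p$-group, since then every non-identity element has order $p$, and conversely. The converse implications in (i)--(iii) are immediate computations of the exponent.

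Part (iv) is then essentially a tautology: under the standing hypothesis that $G$ is abelian, the reformulated \cref{lem:|cv(G)|} gives $\size{\cv G} = \exp(G)$, so $\size{\cv G} = 4$ if and only if $\exp(G) = 4$. I do not expect a genuine obstacle, because all the real content is already packed into \cref{lem:|cv(G)|}; what is left is bookkeeping. The two points that merit a moment's care are: invoking \cref{lem:zero} to supply the abelianness in (i)--(iii) rather than assuming it; and recording the auxiliary fact in the form ``the maximal order of an element equals the exponent'', so that in (iv) one need only note that $\exp(G) = 4$ forces every element order to divide $4$ with at least one element of order $4$ --- no appeal to the structure theorem of finite abelian groups is required.
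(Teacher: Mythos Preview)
Your proposal is correct and matches the paper's approach exactly: the paper's proof is the single sentence ``The assertions follow from \cref{lem:zero,lem:|cv(G)|},'' and you have simply unpacked that sentence in detail. The one extra ingredient you isolate---that the maximal element order in a finite abelian group equals its exponent---is tacitly used but not stated in the paper, so your write-up is, if anything, more explicit.
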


\begin{proof}
	The assertions follow from \cref{lem:zero,lem:|cv(G)|}.
\end{proof}

\section{Generalized dihedral groups}
The easy part of Theorem is proved in this section.
For a finite group \( G \),
the principal character of \( G \) is denoted by \( 1_G \),
the set of linear characters of \( G \) is denoted by \( \Lin{G} \), and
the largest normal subgroup of \( G \) of odd order is denoted by \( O(G) \).

\newpage 
\begin{prop}
	\label{prop:character table}
	Let \( A \) be a non-trivial finite abelian group of odd order,
	\( G \) the generalized dihedral group of \( A \), and
	\( t \) an involution of \( G \).
	Then the character table of \( G \) is given by the following.
	\begin{table}[h]
		\centering
		\newcommand{\columnlength}{9ex}
		\begin{tabular}{p{\columnlength}p{\columnlength}p{\columnlength}p{\columnlength}}
			\toprule
						             & \( 1 \) & \( \phantom{-}t \)  & \( a\phantom{\Re\theta(a)} \) \\
			\midrule
			\( 1_G\)                 & \( 1 \) & \( \phantom{-}1 \)  & \( 1\phantom{\Re\theta(a)} \) \\
			\( \sigma\phantom{_G} \) & \( 1 \) & \( -1 \)            & \( 1\phantom{\Re\theta(a)} \) \\
			\( \theta^G \)           & \( 2 \) & \( \phantom{-}0 \)  & \( 2\Re\theta(a) \) \\
			\bottomrule
		\end{tabular}
	\end{table}

	\noindent
	Here, \( a \) indicate representatives of \( A \setminus \{ 1 \} \) under the equivalence relation generated by \( a \sim a^t \) and
	\( \theta \) indicate representatives of \( \Lin{A} \setminus \{ 1_A \} \) under the equivalence relation generated by \( \theta \sim \theta^t \).
	In particular, \( G \) has \( (\size{A} + 3)/2 \) conjugacy classes, \( G' = O(G) \), and \( C_G(t) = \langle t \rangle \).
\end{prop}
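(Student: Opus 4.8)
The plan is to work throughout with the concrete description $G = \Dih A = A \rtimes \langle t \rangle$, where $t^2 = 1$ and $a^t = a^{-1}$ for all $a \in A$; since $\size{A}$ is odd, the squaring map $a \mapsto a^2$ is an automorphism of $A$, and this single fact drives almost everything. First I would determine the conjugacy classes. For $a \in A$ one has $b a b^{-1} = a$ and $(bt)a(bt)^{-1} = a^{-1}$, so the $G$-class of $a$ is $\{a, a^{-1}\}$, of size $2$ unless $a = 1$ (again because $\size{A}$ is odd). For the coset $At$ one computes $b(at)b^{-1} = (b^2 a)t$, and as $b$ ranges over $A$ so does $b^2$; hence $At$ is a single class of size $\size{A}$. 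Counting gives $1 + (\size{A}-1)/2 + 1 = (\size{A}+3)/2$ conjugacy classes, and the class of the involution $t$ having size $\size{A}$ forces $\size{C_G(t)} = 2$, i.e.\ $C_G(t) = \langle t \rangle$. Next, from $[a,t] = a^{-2}$ together with surjectivity of squaring we get $A \le G'$, while $G/A \cong C_2$ is abelian, so $G' = A$; and since $[G:A] = 2$, every normal subgroup of odd order lies in $A$, whence $O(G) = A = G'$. Non-triviality of $A$ ensures $G$ is non-abelian.

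It remains to exhibit the $(\size{A}+3)/2$ irreducible characters. Since $G/G' = G/A \cong C_2$, there are exactly two linear characters: $1_G$ and the character $\sigma$ with kernel $A$, which is $1$ on $A$ and $-1$ on the coset $At$. For the remaining $(\size{A}-1)/2$ characters I would induce linear characters of $A$: for $\theta \in \Lin{A}$ the induced character $\theta^G$ has degree $[G:A] = 2$, and the index-$2$ induction formula gives $\theta^G(a) = \theta(a) + \theta(a^t) = \theta(a) + \overline{\theta(a)} = 2\Re\theta(a)$ for $a \in A$, while $\theta^G$ vanishes on $At$. By Mackey's irreducibility criterion (equivalently, Clifford theory for the index-$2$ normal subgroup $A$), $\theta^G$ is irreducible exactly when $\theta \ne \theta^t = \bar\theta$, which holds for every $\theta \ne 1_A$ precisely because $\size{A}$ is odd; moreover $\theta^G \cong (\theta')^G$ if and only if $\theta' \in \{\theta, \theta^t\}$. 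Thus, as $\theta$ runs over representatives of $\Lin{A}\setminus\{1_A\}$ modulo $\theta \sim \theta^t$, the characters $\theta^G$ give $(\size{A}-1)/2$ distinct irreducible characters of degree $2$.

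Finally, $2 + (\size{A}-1)/2 = (\size{A}+3)/2$ matches the number of conjugacy classes, so the characters listed above are all of $\Irr{G}$, and reading off their values produces the displayed table. The only mildly delicate point, I expect, is the bookkeeping around the induced characters: verifying their irreducibility and the exact redundancy $\theta^G \cong (\theta')^G \iff \theta' \in \{\theta, \theta^t\}$, and then invoking the class count to conclude completeness; everything else is a direct consequence of the bijectivity of squaring on a group of odd order. As a consistency check one has $1 + 1 + 4\cdot(\size{A}-1)/2 = 2\size{A} = \size{G}$.
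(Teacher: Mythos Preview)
Your argument is correct. The conjugacy-class computation, the identification $G' = A = O(G)$, the centralizer calculation, and the Clifford/Mackey analysis of the induced characters $\theta^G$ are all sound, and the count $2 + (\size{A}-1)/2 = (\size{A}+3)/2$ legitimately certifies that you have found all of $\Irr{G}$.

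The paper, however, does not do any of this by hand: it simply observes that, since $A$ is a non-trivial group of odd order, $G = A \rtimes \langle t \rangle$ is a Frobenius group with kernel $A$ and complement $\langle t \rangle$, and then invokes the standard description of the character table of a Frobenius group (Huppert, Theorem~18.7). That single citation delivers the table, the conjugacy-class count, $G' = O(G)$, and $C_G(t) = \langle t \rangle$ in one stroke. Your route is genuinely different: it is self-contained and avoids the Frobenius machinery entirely, trading a black-box reference for an explicit computation driven by the bijectivity of squaring on $A$. The paper's approach is shorter and situates the result in a known framework; yours is more elementary and would be readable by someone who has seen only induction from a normal subgroup of index two.
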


\begin{proof}
	Because \( A \) is non-trivial group of odd order, \( G \) is a Frobenius group with the Frobenius kernel \( A \) and a Frobenius complement \( \langle t \rangle \).
	Then the assertions follow from \cite[Theorem~18.7]{Hup98}.
\end{proof}

\begin{cor}
	\label{cor:Dih C_3^r}
	\( \Dih C_3^r \) has precisely four character values for every \( r \ge 1 \).
\end{cor}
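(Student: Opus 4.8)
The plan is to read the character values straight off the character table supplied by \cref{prop:character table}, applied with $A = C_3^r$. Since $C_3^r$ is a non-trivial finite abelian group of odd order for every $r \ge 1$, the proposition applies: it exhibits $\Dih C_3^r$ as a Frobenius group whose irreducible characters are the principal character $1_G$, the linear character $\sigma$ with kernel $C_3^r$, and the degree-$2$ induced characters $\theta^G$ attached to the non-trivial $\theta \in \Lin{C_3^r}$, with the values $1$, $-1$, $0$, $2$, and $2\Re\theta(a)$ occurring.

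First I would collect the values coming from $1_G$, $\sigma$, and the character degrees together with $\theta^G(t) = 0$; these contribute exactly $\{0, 1, -1, 2\}$. It then remains to control the entries $2\Re\theta(a)$ for $a \in C_3^r \setminus \{1\}$ and non-trivial $\theta \in \Lin{C_3^r}$. The one point needing a moment's thought is elementary: every element of $C_3^r$ has order dividing $3$, so $\theta(a)$ is a cube root of unity, and since $\Re 1 = 1$ while $\Re e^{\pm 2\pi i/3} = -1/2$, we get $2\Re\theta(a) \in \{2, -1\}$. Hence every table entry lies in $\{0, 1, -1, 2\}$, which gives $\cv{\Dih C_3^r} \subseteq \{0, 1, -1, 2\}$.

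For the reverse inclusion I would note that $r \ge 1$ forces $\Lin{C_3^r} \setminus \{1_{C_3^r}\}$ to be non-empty, so some $\theta^G$ is present; then $1 = 1_G(1)$, $-1 = \sigma(t)$, $0 = \theta^G(t)$, and $2 = \theta^G(1)$ all occur, so $\cv{\Dih C_3^r} = \{0, 1, -1, 2\}$ has precisely four elements. (Alternatively, the inequality $\size{\cv{\Dih C_3^r}} \ge 4$ is immediate from \cref{lem:zero}, as $\Dih C_3^r$ is non-abelian, and only the upper bound really uses the table.) I do not anticipate any genuine obstacle; the argument is a direct reading of \cref{prop:character table} once one observes that a primitive cube root of unity has real part $-1/2$, which pins the degree-$2$ characters to values in $\{-1, 0, 2\}$.
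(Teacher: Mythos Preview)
Your argument is correct and is exactly the intended one: the paper states the corollary without proof, treating it as immediate from \cref{prop:character table}, and the only computation needed beyond reading the table is your observation that $2\Re\theta(a)\in\{-1,2\}$ because $\theta(a)$ is a cube root of unity.
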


\section{Proof of Theorem}
The difficult part of Theorem is proved in this section.
For a finite group \( G \),
the set of character degrees of \( G \) is denoted by \( \cd{G} \).

\begin{lem}
	\label{lem:values}
	Let \( G \) be a finite non-abelian group with precisely four character values.
	\begin{enumerate}
		\item \( \cd{G} = \{ 1, b(G) \} \). \label{item:cd(G)}
		\item \( b(G) \) divides \( \size{G : G'} \). \label{item:b(G)}
		\item \( \cv{G} = \{ -1, 0, 1, b(G) \} \). \label{item:cv(G)}
	\end{enumerate}
\end{lem}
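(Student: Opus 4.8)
The plan is to extract all three assertions from \cref{lem:zero} together with a little arithmetic, treating them in turn. For \cref{item:cd(G)}, I would first note that, by \cref{lem:zero} and the argument in its proof, $\cv G$ contains the three distinct values $0$, $1$, $b(G)$ as well as some value of negative real part; since $\size{\cv G} = 4$, this forces $\cv G = \{0, 1, b(G), z\}$ with $z$ the unique value satisfying $\Re z < 0$, so in particular $z \notin \{0, 1, b(G)\}$. Every character degree is a positive integer lying in $\cv G$, and the only positive integers among $0$, $1$, $b(G)$, $z$ are $1$ and $b(G)$; since both are degrees, $\cd G = \{1, b(G)\}$.

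For \cref{item:b(G)}, write $b = b(G)$; by \cref{item:cd(G)} every nonlinear irreducible character has degree $b$, and I let $n \ge 1$ be their number. Splitting $\sum_{\chi \in \Irr G}\chi(1)^2 = \size G$ into its linear and nonlinear contributions gives $\size{G:G'} + nb^2 = \size{G:G'}\,\size{G'}$, hence
\[
	nb^2 = \size{G:G'}\bigl(\size{G'} - 1\bigr).
\]
I would then argue prime by prime. Fix a prime $p$ and let $p^a$ be the exact power of $p$ dividing $b$. If $p \mid \size{G'}$, then $p \nmid \size{G'} - 1$, so $p^{2a} \mid \size{G:G'}$ and in particular $p^a \mid \size{G:G'}$. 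If $p \nmid \size{G'}$, then the exact power of $p$ dividing $\size G$ equals the one dividing $\size{G:G'}$, and since a character degree divides the group order, $p^a \mid \size G$ yields $p^a \mid \size{G:G'}$. As $p$ was arbitrary, $b \mid \size{G:G'}$.

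For \cref{item:cv(G)}, I would invoke \cref{item:b(G)}: as $b(G) > 1$ divides $\size{G:G'}$, we have $G' \neq G$, so $G$ admits a nonprincipal linear character $\lambda$. Its image $\lambda(G)$ is a nontrivial cyclic subgroup of $\CC^\times$ contained in $\cv G = \{0, 1, b(G), z\}$; since its elements are roots of unity they can equal neither $0$ nor $b(G)$, so $\lambda(G) \subseteq \{1, z\}$. As $\lambda$ is nonprincipal, $z \in \lambda(G)$, hence $z$ is a root of unity and $\lambda(G) = \{1, z\}$; closure under multiplication forces $z^2 = 1$, and since $z \neq 1$ we conclude $z = -1$, i.e.\ $\cv G = \{-1, 0, 1, b(G)\}$.

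The whole argument is short and essentially self-contained given \cref{lem:zero}, so I do not anticipate a genuine obstacle; the only step calling for any care is the prime-by-prime bookkeeping in \cref{item:b(G)}, where one splits according to whether $p$ divides $\size{G'}$ and, in the remaining case, falls back on the classical fact that a character degree divides the group order.
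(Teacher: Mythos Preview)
Your argument is correct and follows the same overall strategy as the paper. For \ref{item:cd(G)} and \ref{item:cv(G)} the two treatments are minor variants of one another: the paper pins down the fourth value in \ref{item:cd(G)} via the row orthogonality $\langle \psi,1_G\rangle=0$ (which shows it is a \emph{negative rational}), and then in \ref{item:cv(G)} simply observes that a negative rational of modulus one must be $-1$; you instead carry only ``$\Re z<0$'' into \ref{item:cv(G)} and recover $z=-1$ from the group structure of $\lambda(G)$. Both work.

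The one place where you take a genuinely longer route is \ref{item:b(G)}. Your prime-by-prime split is valid, but notice that in the case $p\nmid\size{G'}$ you already invoke the classical fact $b(G)\mid\size{G}$. Once you grant that, the whole of \ref{item:b(G)} collapses to a single line, exactly as the paper does it:
\[
\size{G}=\size{G:G'}+n\,b(G)^2,
\]
and since $b(G)\mid\size{G}$ this forces $b(G)\mid\size{G:G'}$ immediately, with no case analysis. So your detour through $nb^2=\size{G:G'}(\size{G'}-1)$ is correct but superfluous.
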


\begin{proof}
	\ref{item:cd(G)}
	As \( G \) is non-abelian, \( b(G) \neq 1 \).
	By \cref{lem:zero}, we have \( \cv{G} \supseteq \{ 0, 1, b(G) \} \).
	Suppose that \( \psi(t) \in \cv{G} \) is the remaining character value.
	From the row orthogonality
	\(
		0 = \langle \psi, 1_G \rangle = \frac{1}{\size{G}}\sum_{g \in G} \psi(g),
	\)
	the remaining character value \( \psi(t) \)  must be a negative rational number.
	Hence \( \cd{G} = \{ 1, b(G) \} \).

	\ref{item:b(G)}
	As \( b(G) \) divides
	\(
		\size{G} = \sum_{\chi \in \Irr{G}} \chi(1)^2 = \size{G : G'} + b(G)^2 + \dotsb + b(G)^2,
	\)
	it follows that \( b(G) \) divides \( \size{G : G'} \).

	\ref{item:cv(G)}
	As \( \size{G : G'} \neq 1 \) by the part~\ref{item:b(G)}, there is a non-principal linear character \( \sigma \).
	Since values of \( \sigma \) are complex numbers of modulus one, we get \( \psi(t) = -1 \).
	Hence \( \cv{G} = \{ -1, 0, 1, b(G) \} \).
\end{proof}

\begin{lem}
	\label{lem:local}
	Let \( G \) be a finite non-abelian group with precisely four character values.
	\begin{enumerate}
		\item \( G/G' \) is a non-trivial elementary abelian \( 2 \)-group. \label{item:elementary}
		\item \( C_G(t) \) is a \( 2 \)-group for every involution \( t \) of \( G \). \label{item:centralizer}
		\item \( G/O(G) \) is a \( 2 \)-group. \label{item:2-nilpotent}
	\end{enumerate}
\end{lem}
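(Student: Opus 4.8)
The plan is to take the three parts in turn, using \cref{lem:values} throughout. Write $b = b(G)$, so that $\cv{G} = \{-1, 0, 1, b\}$, $b > 1$, and $b$ divides $\size{G:G'}$. Once part~(i) is proved, $b$ divides a power of $2$ and is therefore itself a power of $2$ — in particular even and coprime to every odd prime — and this observation powers the rest. For part~(i): the linear characters of $G$ are exactly the irreducible characters of $G/G'$, so their values are roots of unity lying in $\{-1,0,1,b\}$; as the values are real, $b > 1$, and $0$ is not a root of unity, every linear character $\sigma$ satisfies $\sigma^2 = 1_G$. Hence the dual group of $G/G'$, and so $G/G'$ itself, has exponent dividing $2$; it is non-trivial since $\size{G:G'} \ne 1$ by \cref{lem:values}, so it is a non-trivial elementary abelian $2$-group.

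Part~(ii) is the heart of the argument, and I expect it to be the main obstacle. Suppose for contradiction that $C_G(t)$ is not a $2$-group for some involution $t$; by Cauchy, pick $x \in C_G(t)$ of odd prime order $p$, and set $g = tx = xt$, so $g$ has order $2p$ and $g^p = t$. The plan is a three-step squeeze on the column orthogonality relations. Step one: in $\sum_{\chi \in \Irr{G}} \chi(1)\chi(t) = 0$ (orthogonality of the $t$-column against the $1$-column), the linear characters contribute $\sum_{\sigma \in \Lin{G}} \sigma(t)$, which is $\size{G:G'}$ if $t \in G'$ and $0$ otherwise, hence is non-negative; and for a nonlinear $\chi$ the eigenvalues of a representation at the involution $t$ are $\pm 1$, so $\chi(t)$ is an integer with $\chi(t) \equiv \chi(1) = b \equiv 0 \pmod 2$, which together with $\chi(t) \in \{-1,0,1,b\}$ gives $\chi(t) \in \{0, b\}$, also non-negative. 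Since a sum of non-negative terms vanishes, every term is $0$; in particular $\chi(t) = 0$ for all nonlinear $\chi$.

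Step two: the eigenvalues of $g$ are $2p$-th roots of unity, each of the form $\pm\zeta_p^{j}$ and hence congruent to $\pm 1 = (\pm\zeta_p^{j})^{p}$ modulo the prime $(1-\zeta_p)$, so $\chi(g) \equiv \chi(g^{p}) = \chi(t)$ modulo $(1-\zeta_p)$, hence modulo $p$ as both values are integers; for nonlinear $\chi$ this reads $\chi(g) \equiv 0 \pmod p$, and as $p$ divides none of $-1, 1, b$ while $\chi(g) \in \{-1,0,1,b\}$ we conclude $\chi(g) = 0$. Step three: orthogonality of the $g$-column with itself now gives $\size{C_G(g)} = \sum_{\chi} \size{\chi(g)}^2 = \sum_{\sigma \in \Lin{G}} 1 = \size{G:G'}$, since the nonlinear characters all vanish at $g$ and $\size{\sigma(g)} = 1$ for linear $\sigma$; but $g \in C_G(g)$ has order $2p$, so $2p \mid \size{C_G(g)} = \size{G:G'}$, contradicting the fact that $\size{G:G'}$ is a power of $2$. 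The two delicate points are arranging the signs so that the orthogonality relation in step one collapses, and realising that $b$ being a $2$-power is exactly what makes the reduction modulo $p$ in step two conclusive.

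Part~(iii) then follows formally. Since $\cd{G} = \{1, b\}$ with $b$ a power of $2$, no odd prime divides any irreducible character degree of $G$; so, for each odd prime $p$, the Itô–Michler theorem (whose classical solvable case suffices, a group with only two character degrees being solvable) provides a normal abelian Sylow $p$-subgroup of $G$. The product $K$ of these over all odd $p$ is a normal Hall $2'$-subgroup of $G$; it has odd order, so $K \le O(G)$, and $G/K$ is a $2$-group. Hence $G/O(G)$, a quotient of $G/K$, is a $2$-group.
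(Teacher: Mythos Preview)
Your argument is correct in all three parts, but parts~(ii) and~(iii) take longer routes than the paper's.

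For~(ii), your step one already finishes the job: once you know $\chi(t)=0$ for every nonlinear $\chi$, column orthogonality at $t$ itself gives
\[
\size{C_G(t)} = \sum_{\chi\in\Irr{G}}\size{\chi(t)}^2 = \sum_{\sigma\in\Lin{G}}\size{\sigma(t)}^2 = \size{G:G'},
\]
which is a power of $2$ by part~(i); this is exactly what the paper does. Your detour through $g=tx$, the congruence $\chi(g)\equiv\chi(t)\pmod p$, and orthogonality at $g$ is valid but unnecessary. (Incidentally, your non-negativity argument in step one is a slight variant of the paper's: the paper instead observes that the inequality $0\ge\sum_{\sigma}\sigma(t)$ forces some linear $\sigma$ with $\sigma(t)=-1$, and then rules out $\chi(t)=b$ by the tensor trick $(\sigma\otimes\chi)(t)=-b\notin\cv{G}$. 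Your version is arguably cleaner.)

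For~(iii), you appeal to It\^o--Michler prime by prime to assemble a normal Hall $2'$-subgroup. The paper instead quotes Thompson's theorem (\cite[Corollary~12.2]{Isa94}): if a prime $p$ divides $\chi(1)$ for every nonlinear $\chi\in\Irr{G}$, then $G$ has a normal $p$-complement. Since $2\mid b$ and $\cd{G}=\{1,b\}$, this gives a normal $2$-complement in one stroke. Both approaches are standard; Thompson's is the more direct citation here.
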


\begin{proof}
	\ref{item:elementary}
	It follows from \cref{lem:values} and \cref{prop:very few}.

	\ref{item:centralizer}
	Let \( t \) be an involution of \( G \).
	We claim that
	\begin{equation}
		\chi(t) = 0 \tag{\textdagger}\label{eq:claim}
	\end{equation}
	for every non-linear character \( \chi \) of \( G \).
	Suppose that \( \mathcal X \) is a representation of \( G \) affording \( \chi \).
	Because \( \mathcal X(t) \) is involutory, its eigenvalues are \( \pm 1 \).
	As \( b(G) \) is even,
	\( \chi(t) \) equals \( 0 \) or \( b(G) \) by \cref{lem:values}.
	From the column orthogonality,
	\begin{equation*}
		0
		= \sum_{\psi \in \Irr{G}} \psi(t)\psi(1)
		= \sum_{\sigma \in \Lin{G}} \sigma(t) + b(G)\sum_{\mathclap{\psi \in \Irr{G} \setminus \Lin{G}}} \psi(t)
		\ge \sum_{\sigma \in \Lin{G}} \sigma(t).
	\end{equation*}
	(Note that \( \psi(t) \) equals \( 0 \) or \( b(G) \) for \( \psi \in \Irr{G} \setminus \Lin{G} \).)
	Hence \( \sigma(t) = -1 \) for some non-principal linear character \( \sigma \). 
	Thus if \( \chi(t) \neq 0 \),
	then \( -b(G) = \sigma \otimes \chi(t) \in \cv{G} \), a contradiction.

	Therefore, by the claim~(\ref{eq:claim}) and the column orthogonality,
	\begin{equation*}
		\size{C_G(t)} = \sum_{\chi \in \Irr{G}} \size{\chi(t)}^2 = \sum_{\sigma \in \Lin{G}} \size{\sigma(t)}^2 = \size{G : G'}.
	\end{equation*}
	It follows from the part~\ref{item:elementary} that \( C_G(t) \) is a \( 2 \)-group.

	\ref{item:2-nilpotent}
	Since every degree of a non-linear character is even,
	it follows from \cite[Corollary 12.2 (Thompson)]{Isa94} that \( G \) is \( 2 \)-nilpotent.
\end{proof}

\begin{lem}
	\label{lem:global}
	Let \( G \) be a finite non-abelian group with precisely four character values.
	\begin{enumerate}
		\item \( G' = O(G) \). \label{item:residual}
		\item Every involution of \( G \) acts on \( O(G) \) as the inversion. \label{item:inversion}
	\end{enumerate}
\end{lem}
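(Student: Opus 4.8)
The plan is to build on the structure established in \cref{lem:local}: $G$ is $2$-nilpotent, so $G = O(G) \rtimes P$ for a Sylow $2$-subgroup $P$; moreover $G/G'$ is a non-trivial elementary abelian $2$-group, and $C_G(t)$ is a $2$-group for every involution $t$ of $G$.

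First I would prove part~\ref{item:inversion}, recording two byproducts for later use. If $t$ is an involution, then $C_{O(G)}(t) = C_G(t) \cap O(G)$ has odd order and lies inside the $2$-group $C_G(t)$, hence is trivial; so $t$ acts fixed-point-freely on $O(G)$. The classical argument for fixed-point-free involutions — the map $x \mapsto x^{-1}x^{t}$ is a bijection of $O(G)$, and every element of its image is inverted by $t$ — then shows that $t$ inverts $O(G)$ and that $O(G)$ is abelian; this is part~\ref{item:inversion}. First byproduct: $O(G) \neq 1$. Otherwise $G = P$ would be a non-abelian $2$-group, and passing to $G/\ker\chi$ for $\chi \in \Irr G$ of degree $b(G)$ — a group carrying a faithful irreducible character of degree $b(G) \ge 2$ and hence with cyclic center containing a unique involution — one finds $-b(G) \in \cv G$, so $\cv G \supseteq \{-b(G),-1,0,1,b(G)\}$ has five elements, contradicting \cref{lem:values}\ref{item:cv(G)}. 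Second byproduct: $C_P(O(G)) = 1$, since any involution in $C_P(O(G))$ would both centralize and invert the non-trivial group $O(G)$.

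For part~\ref{item:residual}: since $G/G'$ is a $2$-group, every odd-order element lies in $G'$, so $O(G) \le G'$; and a short commutator computation — using $[O(G), t] = O(G)$, which holds as $t$ inverts the odd-order group $O(G)$ — gives $G' = O(G)\,P'$. Hence it suffices to show $P' = 1$, that is, that $P$ is abelian. By the second byproduct, $P$ embeds into $\operatorname{Aut}(O(G))$, and every involution of $P$ acts there as the single automorphism $x \mapsto x^{-1}$; therefore $P$ has a unique involution, so $P$ is cyclic or generalized quaternion. If $P$ is cyclic, then $P \cong P/P' \cong G/G'$ is cyclic and elementary abelian of order $>1$, forcing $P \cong C_{2}$ and $P' = 1$.

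The main obstacle is to exclude $P \cong Q_{2^{n}}$ with $n \ge 3$. There $\size{G:G'} = \size{P:P'} = 4$, so $b(G) \in \{2,4\}$ by \cref{lem:values}\ref{item:b(G)}; and $b(G) = 4$ is impossible, since then $\size G = \sum_{\chi \in \Irr G}\chi(1)^{2} \equiv \size{G:G'} = 4 \pmod{16}$, which would make a Sylow $2$-subgroup have order $4$. So $b(G) = 2$ and $\cd G = \{1,2\}$. I would then apply Clifford theory over the abelian normal subgroup $O(G)$: for a non-trivial $P$-orbit on $\Lin{O(G)}$ with stabilizer $Q \le P$, the irreducible characters of $G$ lying above it are non-linear — they do not contain $O(G)$ in their kernel, while linear characters are trivial on $G' \supseteq O(G)$ — hence have degree $2$; and since the orbit representative $\lambda$ is $Q$-invariant with $\gcd(\size{O(G)}, \size Q) = 1$, it extends to $O(G)Q$, so $2 = \size{P:Q}\cdot\beta(1)$ for every $\beta \in \Irr Q$, which forces $\size{P:Q} = 2$ and $Q$ abelian. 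Thus every such stabilizer is a maximal subgroup of $P$. But the unique involution $z$ of $P = Q_{2^{n}}$ lies in every subgroup of even order, hence in every maximal subgroup of $P$, and therefore fixes every member of $\Lin{O(G)}$; this forces $z$ to act trivially on $O(G)$, i.e.\ $z \in C_P(O(G)) = 1$, a contradiction. Hence $P$ is not generalized quaternion, so $P \cong C_{2}$, $P' = 1$, and $G' = O(G)$.
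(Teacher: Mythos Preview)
Your proof is correct. Part~\ref{item:inversion} matches the paper's argument (fixed-point-free involution on $O(G)$), and your two byproducts are sound.

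For part~\ref{item:residual} you take a genuinely different route. The paper argues by contradiction on the quotient: if $O(G) \neq G'$ then $\bar G = G/O(G)$ is a non-abelian $2$-group with $\size{\cv{\bar G}} = 4$; one then checks that every element of $\bar G' \cap Z(\bar G)$ lies in $\ker\chi$ for all $\chi \in \Irr{\bar G}$ (linear characters kill $\bar G'$, and on central elements a non-linear $\chi$ takes a value of modulus $b(\bar G) \ge 2$, which must then be $b(\bar G)$ itself), so $\bar G' \cap Z(\bar G) = 1$, contradicting nilpotency of $\bar G$. Your approach is instead structural: from $C_P(O(G)) = 1$ and the fact that every involution of $P$ maps to the \emph{same} automorphism of $O(G)$, you deduce that $P$ has a unique involution, hence is cyclic or generalized quaternion, and then eliminate the quaternion case via a Clifford-theoretic degree count over the abelian normal subgroup $O(G)$.

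The paper's argument is considerably shorter, but yours extracts more: you actually prove $P \cong C_2$ along the way, which is precisely what the paper needs in its proof of the main theorem. It is worth noting that your own ``first byproduct'' argument, transported verbatim to $\bar G = G/O(G)$ (which also satisfies $\size{\cv{\bar G}} = 4$ once one observes $\cv{\bar G} \subseteq \cv G$), already yields $-b(\bar G) \in \cv{\bar G}$ and hence a contradiction --- so you were one observation away from the short proof and could have avoided the Clifford detour entirely.
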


\begin{proof}
	\ref{item:residual}
	As \( G/G' \) is a \( 2 \)-group, we have \( O(G) \le G' \) by \cref{lem:local}.
	Suppose \( O(G) \neq G' \).
	Then \( \bar G = G/O(G) \) is a non-abelian \( 2 \)-group and \( \size{\cv{\bar G}} = 4 \).
	Hence, by \cite[Corollaries 2.23 and 2.28]{Isa94},
	\(
		{\bar G}' \cap Z(\bar G) = \bigcap_{\chi \in \Irr{\bar G}} \ker \chi = 1
	\).
	This is a contradiction because \( \bar G \) is nilpotent and \( {\bar G}' \) is a non-trivial normal subgroup.

	\ref{item:inversion}
	Let \( t \) be an involution of \( G \).
	As \( C_G(t) \) is a \( 2 \)-group by \cref{lem:local}\ref{item:centralizer},
	it acts fixed-point-freely on \( O(G) \).
	From \cite[Proposition~16.9e]{Hup98}, it follows that every element of \( O(G) \) is inverted by \( t \).
\end{proof}

\begin{proof}[Proof of Theorem]
	As `if' part is already done in \cref{cor:Dih C_3^r}, we shall prove `only if' part.
	Let \( G \) be a finite non-abelian group with \( \size{\cv{G}} = 4 \) and a Sylow \( 2 \)-subgroup \( P \).
	By \cref{lem:local}\ref{item:elementary} and \cref{lem:global}\ref{item:residual},
	it follows that \( P \) is elementary abelian.
	Let \( s \) and \( t \) be non-trivial elements of \( P \).
	By \cref{lem:global}\ref{item:inversion},
	every element of \( O(G) \) is inverted by \( s \) and \( t \).
	Then \( O(G) \) is abelian and \( O(G) \le C_G(st) \).
	It follows that \( C_G(st) \) cannot be a \( 2 \)-group by \cref{lem:global}\ref{item:residual} and thus \( st = 1 \) by \cref{lem:local}\ref{item:centralizer}.	
	Hence, \( P \) has order two and \( G \) is the generalized dihedral group of \( O(G) \).
	By \cref{prop:character table}, \( O(G) \) must be a non-trivial elementary abelian \( 3 \)-group.
\end{proof}

\section*{Acknowledgements}
The author would like to thank Shigeo Koshitani and Chris Parker for helpful comments.

\end{document}